\author{Ganzhinov Mikhail, \"Ostergård Patric R. J.\\
Department of Information and Communications Engineering\\
Aalto University School of Electrical Engineering\\
P.O.\ Box 15400, 00076 Aalto, Finland}
\date{}
\newcommand{\Sh}{\textrm{S}}
\newcommand{\Aut}{\textrm{Aut}}
\newcommand{\SL}{\textrm{SL}}
\newtheorem{proposition}{Proposition}
\title{Spherical codes with prescribed signed permutation automorphisms inside shells of low-dimensional integer lattices}
\begin{document}
\maketitle
\begin{abstract}
Let $\Sh(n,t,k)$ be the maximum size of a code containing only vectors of the $k$th shell of the integer lattice $\mathbb{Z}^n$ such that the inner product between distinct vectors does not exceed $t$. In this paper we compute lower bounds for $\Sh(n,t,k)$ for small values of $n$, $t$ and $k$ by carrying out computer searches for codes with prescribed automorphisms. We prescribe groups of signed permutation automorphisms acting transitively on the pairs of coordinates and coordinate values as well as other closely related groups of automorphisms. Several of the constructed codes lead to improved lower bounds for spherical codes.
\end{abstract}
\bigskip
\section{Introduction}

A \emph{spherical code} $\mathcal{C}$ is a finite set of points on the unit sphere $S^{n-1}$ in $\mathbb{R}^n$. Such a code $\mathcal{C}$ has parameters $(n,N,t)$ if there are $N$ points in total and the inner product between vectors from the origin to any two distinct points is at most $t$. The \emph{angle set} of the spherical code $\mathcal{C}$ is defined as $\mathcal{A}_\mathcal{C}=\{\langle x,y\rangle: x,y\in\mathcal{C},x\neq y\}$ while individual elements in $A_\mathcal{C}$ are called \emph{angles}.  Two spherical codes are said to be \emph{equivalent} if one can be obtained from the other through an isometry. An isometry (orthogonal transform) mapping a spherical code onto itself is called an automorphism of the code. The set of all automorphisms of a spherical code $\mathcal{C}$ is a group under composition, called the \emph{automorphism group} of the code $\mathcal{C}$ and denoted by $\Aut(\mathcal{C})$. A subgroup of the automorphism group is called a \emph{group of automorphisms}.

The problem of minimizing $t$ for given $n$ and $N$ is known as the \emph{Tammes problem} \cite{Tammes} for the $(n-1)$-sphere. Spherical codes attaining the solution of the Tammes problem for some $n$ and $N$ are called \emph{optimal}. Instead of minimizing $t$ for given $n$ and $N$ one may be interested in maximizing $N$ for given $n$ and $t$. For most parameters exact solutions to the Tammes problem are not known and only bounds for $t$ are available. The lower bounds for $t$ are usually obtained by theoretical arguments \cite{BV,KL}, while upper bounds often come from explicit constructions. 

For small values of $n$ and $N$ various optimization algorithms have been used to obtain upper bounds for $t$ \cite{Cohn,Sl,SHS,W}. Upper bounds have also been obtained by specific constructions utilizing either algebraic or combinatorial arguments \cite{DF,IJM}. In such cases vectors of the constructed spherical codes often have clear structure, for example, all the vectors can be found inside a shell of an (properly scaled) integer lattice. The most notable examples of this are the root system of an $E_8$ lattice and the inner shell of the Leech lattice producing universally optimal spherical codes \cite{Unopt}, which can be embedded into the 4:th and 24:th shells of the integer lattices $\mathbb{Z}^8$ and $\mathbb{Z}^{24}$, respectively \cite{SPLAG}. While these examples are exceptional objects, scaled shells of integer lattices can also generally contain good spherical codes. We denote by $\Sh(n,t,k)$ the maximum size of a code inside the $k$th shell of the integer lattice $\mathbb{Z}^n$ such that the inner product between distinct vectors is at most $t$.

In this work we carry out computer searches for codes containing only vectors of a single shell of an integer lattice in order to obtain lower bounds for $\Sh(n,t,k)$, $6\leq n\leq 14$. A common technique \cite{BEÖVW,LÖB,LÖT} used to limit the search is to prescribe group of automorphisms of a code. We prescribe groups of automorphisms which are inherited from the parent lattice $\mathbb{Z}^n$, mainly the groups of signed permutation automorphisms acting transitively on the pairs of coordinates and coordinate values as well as other closely related groups. Several of found codes improve lower bounds for spherical codes.

We discuss our approach in more detail in Sections \ref{sec:Int} and \ref{sec:Gr}. In Section \ref{sec:Tab} we tabulate the computed lower bounds for $\Sh(n,k,t)$ and list parameters of best spherical codes obtained based on our computations.

\section{Integer lattices}\label{sec:Int}

A \emph{lattice} is a discrete additive subgroup of $\mathbb{R}^n$. A lattice $\Lambda$ generated by a set of linearly independent vectors $B=\{v_1,v_2,\ldots,v_k\}\subset\mathbb{R}^n$ is the set of all the integer linear combinations,
\begin{displaymath}
\Lambda=\left\{\sum_{i=1}^{k}a_iv_i:a_i\in\mathbb{Z}\right\}.
\end{displaymath}
We say that $\Lambda$ is a lattice of \emph{dimension} $n$. The  automorphism group of a lattice in $\mathbb{R}^n$ is the subgroup of the orthogonal group $\SL(n)$ that preserves the lattice. A \emph{shell} $s_m$ of a lattice $\Lambda$ is the set of lattice vectors of squared norm $m$, that is, $s_m=\{v\in\Lambda:\| v\|^2=m\}$.

An \emph{integer lattice} in $\mathbb{R}^n$ is a lattice generated by the vectors of the standard basis obtained from the vector $v=(1,0,\ldots,0)\in\mathbb{R}^n$ by permuting its coordinates. This lattice contains the vectors of $\mathbb{R}^n$ with integer coordinates and is usually denoted by $\mathbb{Z}^n$. The automorphism group of $\mathbb{Z}^n$, denoted by $\Aut(\mathbb{Z}^n)$, is generated by all permutations and sign changes of coordinates and is of order $2^nn!$. The group $\Aut(\mathbb{Z}^n)$ is sometimes called the \emph{signed permutation group} and its elements \emph{signed permutations}. This automorphism group acts transitively on the shells $s_1$, $s_2$ and $s_3$, but not on other shells. For example, orbit representatives of the shell $s_9$ of the $10$-dimensional integer lattice are
\begin{displaymath}
(1,1,1,1,1,1,1,1,1,0)\textrm{, }(2,1,1,1,1,1,0,0,0,0)\textrm{, }(2,2,1,0,0,0,0,0,0,0)\textrm{ and }(3,0,0,0,0,0,0,0,0,0).
\end{displaymath}
One can notice that the last orbit of the shell $s_9$ is a scaled copy of the shell $s_1$ of the same lattice. This leads to following more general observation.

\begin{proposition}\label{prop:1}
Linear maps defined by integer matrices with orthogonal rows of squared norm $k\in\mathbb{N}$ embed the shell $s_m$ into the shell $s_{km}$ for all $m\in\mathbb{N}$. This embedding preserves angles between vectors.
\end{proposition}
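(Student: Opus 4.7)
The plan is to translate the hypothesis on orthogonal rows into a clean matrix identity and then let linear algebra do all the work.

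Let $A$ denote the integer matrix, which I take to be square ($n \times n$); this is the case fitting the wording ``embed the shell $s_m$ into the shell $s_{km}$'' since both shells live in the same $\mathbb{Z}^n$. The hypothesis that the rows are pairwise orthogonal and each of squared norm $k$ is exactly the matrix identity $AA^T = kI_n$. Since $k \in \mathbb{N}$ is nonzero, $\det(A)^2 = k^n > 0$, so $A$ is invertible, and consequently $A^T A = kI_n$ as well.

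With this identity in hand, the required properties follow by direct computation. First, $A$ has integer entries, so for any $v \in \mathbb{Z}^n$ the image $Av$ lies in $\mathbb{Z}^n$. Second, $\|Av\|^2 = v^T A^T A v = k\,v^T v = k\|v\|^2$, so $v \in s_m$ implies $Av \in s_{km}$. Third, the map is injective because $A$ is invertible, which is what justifies the word ``embed''. For the angle-preservation claim, the same identity yields $\langle Av, Aw \rangle = v^T A^T A w = k \langle v, w \rangle$ for any $v, w$ in the shell, while $\|Av\|\,\|Aw\| = k\|v\|\,\|w\|$. Hence the cosine of the angle between $Av$ and $Aw$ coincides with that between $v$ and $w$.

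There is no substantive obstacle here: the only point meriting any care is the passage from $AA^T = kI_n$ to $A^T A = kI_n$, which uses the squareness of $A$ together with $k \neq 0$. Once that step is in place, all the other claims reduce to one-line verifications.
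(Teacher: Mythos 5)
Your proof is correct; the paper itself states Proposition~\ref{prop:1} without proof, as an immediate observation, and your computation is exactly the standard verification one would supply: $AA^T=kI_n$ gives $A^TA=kI_n$, hence integrality, norm scaling by $k$, inner-product scaling by $k$, injectivity, and preservation of angle cosines. One small remark: the statement also covers the case of a non-square $q\times n$ integer matrix with orthogonal rows of squared norm $k$ (embedding a shell of $\mathbb{Z}^q$ into a shell of $\mathbb{Z}^n$ with $q\le n$, in the spirit of Proposition~2); there $A^TA\neq kI_n$, but the argument goes through even more directly by letting the rows of $A$ be the images of the standard basis vectors, so that $\langle vA,wA\rangle = vAA^Tw^T = k\langle v,w\rangle$ follows from $AA^T=kI_q$ alone, with injectivity from the rows being linearly independent.
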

\noindent By slightly abusing terminology, we call such matrices \emph{integer orthogonal matrices}.

The shell $s_m$ of any integer lattice is at most $2m$-angular, with inner products between distinct shell vectors belonging to the set $\{m-1,m-2,\ldots,-m\}$. Shells of integer lattices have the following well known property.

\begin{proposition}
Let $\mathcal{C}\subset\mathbb{R}^{n}$ be a spherical code with rational angle set spanning the entire space $\mathbb{R}^n$. Then there exists a shell $s_m$ of an integer lattice $\mathbb{Z}^{q}$, $q\geq n$ such that $\mathcal{C}$ can be embedded into $s_m$ via isometries and scalings.
\end{proposition}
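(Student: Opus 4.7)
The plan is to scale $\mathcal{C}$ so that it lies inside an integer lattice $L\subset\mathbb{R}^n$, and then to show that a further integer rescaling of $L$ embeds isometrically into $\mathbb{Z}^{4n}$. Since all scaled code vectors have the same squared norm, their image will lie in a single shell of $\mathbb{Z}^{4n}$.

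First I would choose $c\in\mathbb{N}$ clearing the denominators in the rational Gram matrix of $\mathcal{C}$, so that $\langle cv,cw\rangle = c^{2}\langle v,w\rangle \in\mathbb{Z}$ for every $v,w\in\mathcal{C}$. Let $L$ be the $\mathbb{Z}$-span of $c\,\mathcal{C}$ in $\mathbb{R}^n$; it is full rank because $\mathcal{C}$ spans $\mathbb{R}^n$ by hypothesis, and discrete because any nonzero element has positive integer squared norm and therefore norm at least $1$. Fix a $\mathbb{Z}$-basis $u_1,\dots,u_n$ of $L$ and let $H$ be its positive-definite integer Gram matrix.

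The heart of the argument is to produce $c'\in\mathbb{N}$ and an integer matrix $A\in\mathbb{Z}^{4n\times n}$ with $c'^{2}H = A^{T} A$. By iteratively completing the square one rationally diagonalises $H$: $H = P^{T} D P$ with $P$ an invertible rational matrix and $D=\mathrm{diag}(d_1,\dots,d_n)$ with each $d_i\in\mathbb{Q}_{>0}$. Applied after clearing denominators, Lagrange's four-square theorem expresses each positive rational $d_i$ as a sum of four rational squares $r_{i1}^{2}+\dots+r_{i4}^{2}$, so the rational $(4n)\times n$ matrix $B$ whose $i$-th column carries the $r_{ij}$'s in rows $4i-3,\dots,4i$ satisfies $B^{T} B = D$. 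Setting $A = c'BP$ for a common denominator $c'$ of the entries of $BP$ gives an integer matrix with $A^{T} A = c'^{2} H$. The columns of $A$ are integer vectors in $\mathbb{Z}^{4n}$ with the same Gram matrix as $c' u_1,\dots,c' u_n$, so the linear map $\phi\colon\mathbb{R}^n\to\mathbb{R}^{4n}$ sending $c'u_i$ to the $i$-th column of $A$ is an isometric embedding that takes $c'L$ into $\mathbb{Z}^{4n}$. Applying $\phi$ to the doubly scaled code $cc'\,\mathcal{C}\subset c'L$ yields integer vectors all of squared norm $(cc')^{2}$, placing a copy of $\mathcal{C}$ (up to the isometry $\phi$ and the scaling $cc'$) inside the shell $s_m$ of $\mathbb{Z}^{4n}$ with $m=(cc')^{2}$.

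The main obstacle is the integer factorisation $c'^{2} H = A^{T} A$: over the integers a positive-definite Gram matrix need not admit a factorisation $B^{T} B$ with integer $B$, and the scaling $c'$ supplied by Lagrange's four-square theorem is precisely what bypasses this obstruction. It is also what forces the proposition to permit scalings in addition to isometries.
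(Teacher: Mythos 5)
Your proof is correct, and it takes a genuinely different route from the paper's. The paper chooses a basis of $\mathbb{R}^n$ from within the code, applies Gram--Schmidt, and scales so that the resulting orthogonal basis vectors have integer squared norms $m_j$ and integer inner products with every codeword; each basis vector of squared norm $m_j$ is then realized as the all-ones vector in a block $\mathbb{R}^{m_j}$, so the code lands (after a final scaling to clear rational coordinates) in a shell of $\mathbb{Z}^{q}$ with $q=\sum_j m_j$. Structurally both arguments diagonalize the rational Gram form and then express each diagonal entry as a sum of squares: the paper uses the trivial decomposition $m_j=1+\cdots+1$, while you invoke Lagrange's four-square theorem to write each (rational) diagonal entry as a sum of four rational squares, packaged as the integer factorization $c'^2H=A^TA$ with $A\in\mathbb{Z}^{4n\times n}$. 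What your version buys is a target dimension bounded by $4n$, independent of the scalings involved --- the paper explicitly concedes that its own $q=\sum_j m_j$ is ``far from optimal'' --- and, as a bonus, an isometric embedding of a scaled copy of the entire lattice $L$ generated by the code, not merely of the code itself. The cost is the reliance on a classical but nontrivial number-theoretic input, whereas the paper's construction is completely elementary. All the individual steps of your argument (discreteness of $L$ via integrality of squared norms, rational congruence diagonalization, four squares for positive rationals, and the Gram-matrix criterion for the existence of the isometry $\phi$) are sound.
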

\begin{proof}
First choose a basis inside the spherical code $\mathcal{C}$ and apply the Gram-Schmidt orthogonalization process to it. Multiply every vector of the newly formed orthogonal basis by a natural number in such a way that
\begin{enumerate}
\item all (Euclidean) norms of basis vectors are square roots of natural numbers, and
\item all the inner products between the basis vectors and the vectors of the code $\mathcal{C}$ are integers.
\end{enumerate}
Let $v_1,v_2,\ldots,v_n$ be the vectors of this orthogonal basis. Each vector $v_j$ of norm $\sqrt{m_j}$ can be embedded into a space $V_j\cong\mathbb{R}^{m_j}$ as an integral vector $(1,1,\ldots,1)$. Now, a scaled isomorphic copy of the code $\mathcal{C}$ can be constructed in the space $V=\oplus_{1\leq j\leq n}V_j$ in a natural way. The space $V$ has the standard basis induced by standard bases of the spaces $V_j$. All vectors of the constructed copy have rational inner products with the standard basis vectors of $V$ and thus become integral after proper scaling.
\end{proof}
Often, the dimension of the space $V$ in the proof can be lowered considerably and the described embedding is far from optimal from that point of view.

\section{Code construction}\label{sec:Gr}

Codes embedded into a shell of an integer lattice with large angular separation between the codevectors can inherit some symmetries of the lattice. The search of such codes is straightforward as long as signed permutation automorphisms form a group of suitable size.

To study codes with prescribed signed permutation automorphisms in $V=\mathbb{R}^n$, it is convenient first to map any vector $v\in\mathbb{R}^n$ into $W=\mathbb{R}^{2n}$ by the following angle preserving linear map:
\begin{displaymath}
f:V\rightarrow W\textrm{, }f((a_1,\ldots,a_n))=(a_1,-a_1,\ldots,a_n,-a_n)\textrm{ for all }(a_1,\ldots,a_n)\in\mathbb{R}^n.
\end{displaymath}
The group of signed permutations acting on integer vectors of $V$ can be studied via its isomorphic permutation representation on $f(V)\subset W$ acting on $f$-images of integer vectors of $V$. The permutation representation in question permutes the standard basis vectors of $W$ containing all permutations stabilizing the partition of coordinates
\begin{displaymath}
\{1,2\},\{3,4\},\ldots,\{2n-1,2n\}.
\end{displaymath}
A \emph{block system} for the action of a group $G$ on a set $S$ is a partition of $S$ that is $G$-invariant. Permutation groups with block systems with blocks of size 2 can represent signed groups according to the presentation above.

The problem of finding the largest code for a given group of automorphisms $G\leq\Aut(\mathbb{Z}^n)$, an integer lattice shell $s_k$ and inner product at most $t$ can be transformed into an instance of a weighted maximum clique problem. A weighted graph $\Gamma$ is constructed where each $G$-orbit with the inner product between any two vectors is at most $t$ is represented by a vertex and the weight of a vertex is the cardinality of the corresponding orbit. There is a edge between two vertices of the graph if the inner product between any two vectors of the orbits is at most $t$. Now, the largest codes with prescribed group of automorphism $G$ correspond to maximum weight cliques of $\Gamma$.

In practice it is necessary to restrict the types of prescribed signed permutation groups. In this work we consider
\begin{enumerate}
\item signed permutation groups acting transitively on all the coordinate/sign pairs or leaving a few coordinates and their values unchanged,
\item subgroups of signed permutation groups acting transitively on all the coordinate/sign pairs stabilizing coordinates, blocks of coordinates or coordinate/sign pairs,
\item permutation groups acting transitively on all coordinates or leaving a few coordinates unchanged, and
\item some other small permutation and signed permutation groups.
\end{enumerate}
Transitive permutation groups were classified \cite{CH,HRT,H} up to degree 48, meaning that all signed permutation groups acting transitively on coordinate/sign pairs are essentially known up to degree 24. We systematically consider signed permutation groups up to degree 14, acting transitively on all coordinate/sign pairs or fixing a coordinate and its sign. Other types of groups of automorphisms we prescribe less systematically. We use GAP \cite{G} and its GRAPE package \cite{Grape} in our calculations. Some groups are excluded from the search if the associated weighted graph $\Gamma$ is too large or the problem too time consuming. Finally, in some cases we use $Y_n$ constructions of \cite{EZ} (see also \cite{ERS}) and other methods if they give better results.

\section{Results}\label{sec:Tab} 

In this section we report the computed lower bounds for $\Sh(n,t,k)$. For $6\leq n\leq 11$ and $12\leq n\leq14$ we study shells $s_k$ within the ranges $4\leq k\leq 15$ and $4\leq k\leq 26-n$ respectively. For $n\leq5$ it may be possible to determine in most cases the exact values of $\Sh(n,t,k)$ for $4\leq k\leq 15$, hence corresponding bounds were not computed here.

The lower bounds are tabulated in Tables \ref{tab:t2} to \ref{tab:t10}. A construction type of a code leading to a bound is given after the bound, many lower bounds can be obtained via multiple construction types; abbreviations for the types are listed in Table \ref{T1}. Some additional lower bounds for parameters not included into previous tables are given in Table \ref{tab:t11}; these bounds were obtained through non-systematic search but the bounds themselves seem to be strong enough to be mentioned here. Explicit codes attaining the lower bounds are given in \cite{Gan}.

\begin{table}[H]  
\centering 
\begin{tabular}{cl}\hline\hline   
 Abbreviation &Explanation\\\hline     
sq&  The code can be constructed by prescribing signed permutation automorphisms acting\\
    & transitively on $2(d-q)$ pairs of coordinates and signs. The remaining pairs of \\
    & coordinates and signs remain unchanged. If $q=0$ then $q$ omitted.\\\hline
pq & The code can be constructed by prescribing automorphisms permuting transitively on\\
    &  $(d-q)$ coordinates and fixing all the remaining coordinates. If $q=0$ then $q$ omitted.\\\hline
n  & The code can be constructed by prescribing automorphism groups permuting\\
    & coordinates or pairs of coordinates and signs intransitively.\\\hline
oq& The code can be constructed inside the shell $s_q$ which is embedded into the shell $s_k$\\
    & by Proposition \ref{prop:1}.\\\hline
x  & The code can be constructed by one of the $Y_n$ constructions of \cite{EZ}. \\\hline
+ & The code contains additional, explicitly added vectors. \\\hline
-  & The lower bound is not computed. \\\hline
\end{tabular}
\caption{Abbreviations for types of constructions used} 
\label{T1}
\end{table}
\begin{table}[H]
\small\begin{tabular}{|c||l|l|l|l|l|l|l|l|l|l|l|l|}\hline
$t\backslash k$ & 4        & 5        & 6      &7       &8        &9        &10    &11     &12    &13      &14        &15\\\hline\hline
1                      &18s2   &18s1    &12s    &12s    &12s     &12s     &12s   &12s   &12s  &12s     &12s      &12s\\\hline
2                      & 60s     &32s1   &32s     &18p   &18s     &18n   &18o5 &13s2+ &15n  &12s    &12s      &12s\\\hline
3                      &           &56n    &36s     &32n   &26p1+&24p    &21n   &19n   &21n  &16n   &14n     &14n\\\hline
4                      &           &          &160o3 &66n+&60o4  &48s     &32o5 &32s2  &32s  &20n    &20n     &18n+\\\hline
5                      &           &          &          &192s &100s2 &70n    &60s   &40s1  &32s  &32n    &28n+   &32s\\\hline
6                      &           &          &          &       &252s   &116n   &72n   &58n+ &46s  &40n   &36s     &32s\\\hline
7                      &           &          &          &       &          &196n   &144s &96s   &66n  &50s1   &48n    &42p\\\hline
8                      &           &          &          &       &          &          &504s &180s &164n &96s    &66o7   &55p\\\hline
\end{tabular}
\caption{Lower bounds for $\Sh(6,t,k)$}
\label{tab:t2}
\end{table}
\begin{table}[H]
\small\begin{tabular}{|c||l|l|l|l|l|l|l|l|l|l|l|l|}\hline
$t\backslash k$ & 4         & 5         & 6         &7      &8           &9         &10      &11      &12       &13        &14      &15\\\hline\hline
1                      &  24n    &24s1     &16n     &16s    &14s       &14s      &14s      &14s    &14s      &14s      &14s     &14s\\\hline
2                      &  126s   &46s3    &36n       &22p2 &28n     &20s3      &20n     &18n    &16n     &15n      &15s3+ &14s\\\hline
3                      &           &124s1+ &82s3+   &72s   &36n      &38n      &30n     &30n    &23n     &20n       &18n     &20n\\\hline
4                      &           &            &232s1   &96s3  &84s     &57s3     &48n     &34s3+&56s      &30s3+   &26n    &24n\\\hline
5                      &           &            &           &576s  &156s3  &112n    &72s1     &70n    &56s      &47s+    &37s3    &38s2\\\hline
6                      &           &            &           &        &756s    &250n+  &152n    &86s1+ &86n     &60s1     &56s3+ &48s3\\\hline
7                      &           &            &           &        &           &630s   &270n+  &180n    &106s1+&84n      &72s3    &64n\\\hline
8                      &           &            &           &        &           &          &828s+   &354n   &328n    &168s1   &130n+ &80n\\\hline
\end{tabular}
\caption{Lower bounds for $\Sh(7,t,k)$}
\label{tab:t3}
\end{table}
\begin{table}[H]
\small\begin{tabular}{|c||l|l|l|l|l|l|l|l|l|l|l|l|}\hline
$t\backslash k$ & 4         & 5         & 6         &7         &8         &9         &10      &11          &12        &13         &14       &15\\\hline\hline
1                      & 26n     & 28s1    & 17n+   &18n     &16s      &16s     &16s      &16s        &16s       &16s       &16s      &16s\\\hline
2                      & 240s    & 72n     & 64s      &32s     &36n      &24n     &28o5    &22n        &20n      &18n       &18o7    &16s\\\hline
3                      &           & 288s    & 112s    &80n     &52n      &64s      &37p1+  &40n       &28n      &26s2      &22n     &28o5\\\hline
4                      &           &            & 448s   &196n    &240s     &80s      &72s     &48n       &64s      &40n        &34n    &28o5\\\hline
5                      &           &            &           &1024s   &280n+ &170s1   &112s    &88s1      &64s      &70s1      &58n    &64s\\\hline
6                      &           &            &           &           &2160s   &460n    &288o5  &148s+    &240s    &84s1      &84n    &72o5\\\hline
7                      &           &            &           &           &           &1776s   &504s     &336n+   &240s    &146n+   &112s   &90n+\\\hline
8                      &           &            &           &           &           &           &2400s   &704s      &512s     &296n      &212s+&144n\\\hline
\end{tabular}
\caption{Lower bounds for $\Sh(8,t,k)$}
\label{tab:t4}
\end{table}
\begin{table}[H]
\small\begin{tabular}{|c||l|l|l|l|l|l|l|l|l|l|l|l|}\hline
$t\backslash k$ & 4        & 5          & 6          &7           &8           &9          &10          &11       &12        &13      &14        &15\\\hline\hline
1                      & 36s     & 32s1     &32s1    &24n       &21p1       &20x      &18s1+     &18s      &18s       &18p2  &18s       &18p2\\\hline
2                      & 306s   & 90n+    & 68n     &36s        &36n        &32s1     &30n        &24n      &24n      &20n    &21n      &20n\\\hline
3                      &           & 612s     &176n+  &128s1     &64n       &80n      &42n        &52n+    &39n      &32s1   &27p      &26p3 \\\hline
4                      &           &            &1056s   &352s1     &272s1    &108n     &88s1       &74s+    &96s       &48n   &48n      &36p2+\\\hline
5                      &           &            &           &1664s1   &464s1     &482s1   &160s1     &112n     &96s      &80s1   &74s+    &64s1\\\hline
6                      &           &            &           &             &4432s1   &706s1+ &432s       &240s1+ &244s1+ &108s  &96s1     &84s \\\hline
7                      &           &            &           &             &             &5426s    &1056s1+&580s1+  &288s    &304s1 &148n+  &152n\\\hline
\end{tabular}
\caption{Lower bounds for $\Sh(9,t,k)$}
\label{tab:t5}
\end{table}
\begin{table}[H]
\small\begin{tabular}{|c||l|l|l|l|l|l|l|l|l|l|l|l|}\hline
$t\backslash k$ & 4        & 5            & 6         &7             &8          &9          &10       &11       &12           &13       &14       &15\\\hline\hline
1                      & 44n     & 40n       &40s       &24s1+      &22n      &22p1     &21s1    &20s     &20s          &20s     &20s      &20s\\\hline
2                      & 500s   & 108s1     &104s2   &64s2        &48n      &40s       &40s      &32s2    &40s         &34s2   &24o7    &22s2+\\\hline
3                      &           & 1192x+  &252s1   &136n       &100s     &82n       &50n+   &51p3    &41s1+     &36s2+ &35p3+ &32s2\\\hline
4                      &           &              &2240s   &512n       &500o4   &148n     &122p    &96s2    &104x       &80s     &64s2    &48s2\\\hline
5                      &           &              &           &3584s+   &738s1    &546s2    &270s    &160s2   &114s1+  &114s1  &80n     &104n\\\hline
6                      &           &              &           &              &7220s    &1176n+ &1192s  &324s2+&272s2    &152n+ &160s     &104n\\\hline
7                      &           &              &           &              &            &12412s+&1952n+&904s1+&420s2+  &372s2  &204n+ &160s2\\\hline
\end{tabular}
\caption{Lower bounds for $\Sh(10,t,k)$}
\label{tab:t6}
\end{table}
\begin{table}[H]
\small\begin{tabular}{|c||l|l|l|l|l|l|l|l|l|l|l|l|}\hline
$t\backslash k$ & 4      &5          & 6       &7         &8            &9           &10        &11          &12       &13         &14       &15\\\hline\hline
1                      & 55p  &42n+    &40s1   &26s2+  &24s2+    &24u        &24p1     &24s1      &22s      &22s       &22s      &22s\\\hline
2                      & 582x&176p1   &116n   &80s1    &55n        &50n+      &42s1+  &38s2+    &40s1    &36s3+   &29n+    &22s\\\hline
3                      &        &2156x+ &352s   &216n    &144n      &132s       &83s3     &72x       &55p      &48s1+   &37s3    &40n\\\hline
4                      &        &            &4664s &704n    &540s1     &217n+    &144n    &112s3     &136x    &88s      &72n      &50s3+\\\hline
5                      &        &            &         &6736x+ &1408n+  &982s1     &328s1+ &264n+  &144s3   &152n    &132p     &104s3+\\\hline
6                      &        &            &         &           &13572s1+&2096s2+&1388s1+&448n+  &372s3+ &220s     &204n+ &144s2\\\hline
7                      &        &            &         &           &              &-            &3850p+ &2406n+ &680n+ &492n    &304n+  &284n+\\\hline
\end{tabular}
\caption{Lower bounds for $\Sh(11,t,k)$}
\label{tab:t7}
\end{table}
\begin{table}[H]
\small\begin{tabular}{|c||l|l|l|l|l|l|l|l|l|l|l|}\hline
$t\backslash k$ & 4         & 5          & 6          &7           &8           &9          &10        &11          &12       &13       &14 \\\hline\hline
1                      & 72x      &48x      & 44s2+   &32n        &27n       &28n      &26s1+   &24s         &24s     &24s     &24s\\\hline
2                      & 840s    &216s     &160o3    &84s2+    &72o4      &55s4     &48s       &40s2      &44s2+  &38s4+ &32o7\\\hline
3                      &            &2656x+ &704s     &288n      &154n+    &160o3   &92s1+   &80n        &72o4   &55s2+ &42s2+\\\hline
4                      &            &            &8928s   &1104s     &840s       &408s     &288s     &144n      &160o3  &98n    &96s\\\hline
5                      &            &            &            &13504x+&2304s     &1062s2 &528s      &276n+    &192s   &200n   &156p1+\\\hline
6                      &            &            &            &             &26760s+ &4688s   &2656o5  &768s      &840s    &320n+&288o7\\\hline
7                      &            &            &            &             &             &-          &8658p1+&2782s2+&1032s  &696n+ &504s\\\hline
\end{tabular}
\caption{Lower bounds for $\Sh(12,t,k)$}
\label{tab:t8}
\end{table}
\begin{table}[H]
\small\begin{tabular}{|c||l|l|l|l|l|l|l|l|l|l|}\hline
$t\backslash k$  & 4         & 5        & 6          &7          &8            &9           &10          &11         &12       &13\\\hline\hline
1                      & 104s    & 56n     &48s1      &48s1     &30n        &32n        &27p1+    &32n       &28s+   & 32x\\\hline
2                      & 1066x  & 288s1  &188n+   &104s1+  &72o4      &60s5      &56n        &48s1      &46s3+ &40s5+\\\hline
3                      &            &3988x+&832s      &472n     &184s1+   &170n      &99s2+    &112n+   &80s+   &64x\\\hline
4                      &            &           &11248x+&1904s1+&998n      &460s1+  &336s1     &168s1    &208s    &136n\\\hline
5                      &            &           &             &25392x+&3744s1  &1705s1   &632n      &576s1    &256s1  &312s1\\\hline
6                      &            &           &             &             &-           & -          &2658o5+&1040s1+&864s1  &344n+\\\hline
\end{tabular}
\caption{Lower bounds for $\Sh(13,t,k)$}
\label{tab:t9}
\end{table}
\begin{table}[H]
\small\begin{tabular}{|c||l|l|l|l|l|l|l|l|l|l|l|}\hline
$t\backslash k$ & 4       & 5         & 6          &7          &8            &9           &10        &11       &12\\\hline\hline
1                      & 112s  & 64x     &64n       &64s       &32n         &36n+     &32n      &32n     &28s\\\hline
2                      & 1484s& 448x   &240s2    &128s      &112s       &64n        &64n      &56n     &64o6\\\hline
3                      &         & 5464x+&1344s   &600s2+  &210s2+   &220n      &112s     &124n   &83n+\\\hline
4                      &         &           &18464x+&2912s2+&1932s     &496s2+   &448s     &224s   &240o6\\\hline
5                      &         &           &             &47424x+&6736n+  &2330s2   &954n+  &896s    &320n\\\hline
6                      &         &           &             &            &-             &-            &5464o5 &1792s  &1344s\\\hline
\end{tabular}
\caption{Lower bounds for $\Sh(14,t,k)$}
\label{tab:t10}
\end{table}
\begin{table}[H]\centering
\small\begin{tabular}{c|c|c|c}\hline\hline
$\Sh(5,10,8)\geq180$    & $\Sh(12,14,8)\geq1584$      &$\Sh(13,13,7)\geq1318$&$\Sh(15,9,5)\geq4480$\\
$\Sh(6,12,16)\geq264$  & $\Sh(12,15,7)\geq384$        &$\Sh(14,13,5)\geq362$ &$\Sh(15,55,7)\geq48$\\
$\Sh(9,17,8)\geq144$    & $\Sh(12,17,8)\geq384$        &$\Sh(14,21,8)\geq364$ &$\Sh(16,9,5)\geq5762$\\
$\Sh(10,17,8)\geq180$  & $\Sh(12,19,8)\geq264$        &$\Sh(15,8,4)\geq2564$ &\\\hline\hline
\end{tabular}
\caption{Additional bounds}
\label{tab:t11}
\end{table}
Most codes associated with the lower bounds for $\Sh(n,t,k)$ produce decent but not exceptionally strong spherical codes with some exceptions. By searching through Tables \ref{tab:t2} to \ref{tab:t10} we managed to find several spherical codes which seem to either outperform or fit nicely between the best known spherical codes with comparable parameters. The found examples are given in Table \ref{tab:t12} while the explicit codes can be found in \cite{Gan}. The new codes are compared to examples of the Appendix D of \cite{EZ}. In the Table \ref{tab:t12} by $(n,k,t,N)$ in the column structure we denote a code containing $N$ vectors of the $k$th shell of the lattice $\mathbb{Z}^n$ such that the inner product between any two different vectors is at most $t$. Every new code is obtained by scaling a large $(n,k,t,N)$-code which in most cases is associated with a lower bound for $\Sh(n,t,k)$ and possibly augumenting it by (properly scaled) vectors of a different shell. We did not try to augument every code related to a lower bound $\Sh(n,t,k)$ with additional vectors of different shells, it is possible that additional good spherical codes could be constructed this way. It is also possible (but we did not perform these searches systematically) to include vertices associated with different shells of the integer lattice into a weighted graph $G$ and perform maximum weight clique searches in order to find good spherical codes.
\begin{table}[H]\centering
\small\begin{tabular}{|c|c|c||c|}\hline
$n$&Code                     &     Structure                                                    & \cite{EZ}                                               \\\hline\hline
1   &$(6,568,4/5)$         &   $(6,10,8,504)\cup(6,6,4,64)$                          &$(6,558,4/5)$                                         \\\hline 
2   &$(8,1024,5/7)$       &   $(8,7,5,1024)$                                               &$(8,992,5/7)$                                         \\\hline
3   &$(9,612,3/5)$         &    $(9,5,3,612)$                                               &$(9,610,3/5)$                                         \\\hline
4   &$(9,1056,2/3)$       &    $(9,6,4,1056)$                                             &$(9,1008,2/3)$                                       \\\hline
5   &$(9,146,3/7)$         &    $(9,7,3,128)\cup(9,1,0,18)$                           &$(9,130,3/7)$                                         \\\hline
6   &$(9,4450,3/4)$       &    $(9,8,6,4432)\cup(9,1,0,18)$                         &$(9,3586,3/4)$                                       \\\hline
7   &$(9,6404,7/9)$       &    $(9,9,7,5336)\cup(9,5,3,612)\cup(9,3,2,456)$ &$(9,5954,7/9)$                                       \\\hline
8   &$(10,1192,3/5)$     &   $(10,5,3,1192)$                                            &$(10,1190,3/5)$                                     \\\hline
9   &$(10,3584,5/7)$     &   $(10,7,5,3584)$                                             &$(10,3488,5/7)$                                     \\\hline
10 &$(10,13564,7/9)$    &   $(10,9,7,12412)\cup(10,5,3,1152)$                 &$(10,13510,7/9)$                                  \\\hline
11 &$(11,2156,3/5)$     &  $(11,5,3,2156)$                                             &$(11,2154,3/5)$                                     \\\hline
12 &$(11,4664,2/3)$     &  $(11,6,4,4664)$                                             &$(11,4604,2/3)$                                     \\\hline
13 &$(11,80,2/7)$        &  $(11,7,2,80)$                                                 &$(11,79,1/\sqrt{12})$                              \\\hline
14 &$(11,224,3/7)$      &  $(11,7,3,216)\cup(11,1,0,8)$                           &$(11,210,3/7)$                                        \\\hline
15 &$(11,6920,5/7)$     &  $(11,7,5,6720)\cup(11,3,2,200)$                     &$(11,6792,5/7)$                                      \\\hline
16 &$(11,166,3/8)$      &  $(11,8,3,144)\cup(11,1,0,22)$                         &$(11,136,1/3)$ and $(11,210,3/7)$           \\\hline
17 &$(11,13714,3/4)$   &  $(11,8,6,13212)\cup(11,4,3,502)$                   &$(11,12994,3/4)$                                     \\\hline
18 &$(12,2656,3/5)$     &  $(12,5,3,2656)$                                            &$(12,2605,3/5)$                                       \\\hline
19 &$(12,13536,5/7)$   &  $(12,7,5,13504)\cup(12,3,2,32)$                     &$(12,13280,5/7)$                                     \\\hline
20 &$(12,408,4/9)$       &  $(12,9,4,408)$                                              &$(12,396,7/15)$                                      \\\hline
21 &$(12,96,3/11)$       &  $(12,11,3,80)\cup(12,5,1,16)$                        &$(12,95,(13-2\sqrt{2})/7)$                       \\\hline
22 &$(12,1608,4/7)$     &  $(12,14,8,1584)\cup(12,6,3,24)$                     &$(12,1144,5/9)$ and $(12,2605,3/5)$        \\\hline
23 &$(13,3988,3/5)$     &  $(13,5,3,3988)$                                            &$(13,3986,3/5)$                                       \\\hline
24 &$(13,11248,2/3)$   &  $(13,6,4,11248)$                                          &$(13,11176,2/3)$                                      \\\hline
25 &$(13,492,3/7)$      &  $(13,7,3,472)\cup(13,1,0,20)$                        &$(13,416,3/7)$                                          \\\hline
26 &$(13,1978,5/9)$    &  $(13,9,5,1658)\cup(13,6,3,320)$                    &$(13,1490,(1+\sqrt{8})/7)$ and $(13,3986,3/5)$\\\hline
27 &$(13,112,3/11)$    &  $(13,11,3,96)\cup(13,5,1,16)$                        &$(13,104,1/4)$ and $(13,140,19/61)$          \\\hline
28 &$(13,578,5/11)$    &  $(13,11,5,576)\cup(13,1,0,2)$                        &$(13,416,3/7)$ and $(13,1154,1/2)$           \\\hline
29 &$(13,136,4/13)$    &  $(13,13,4,136)$                                            &$(13,104,1/4)$ and $(13,140,19/61)$          \\\hline
30 &$(13,313,5/13)$   &  $(13,13,5,312)\cup(13,1,0,1)$                         &$(13,308,5/13)$                                        \\\hline
31 &$(14,6488,3/5)$   &  $(14,5,3,5464)\cup(14,14,8,1024)$                  &$(14,6460,3/5)$                                        \\\hline
32 &$(14,240,1/3)$     &  $(14,6,2,240)$                                               &$(14,224,1/3)$                                          \\\hline
33 &$(14,19488,2/3)$  &  $(14,6,4,18464)\cup(14,14,8,1024)$                &$(14,19180,2/3)$                                      \\\hline
34 &$(14,600,3/7)$     &  $(14,7,3,600)$                                                &$(14,466,2/5)$ and $(14,896,5/11)$           \\\hline
35 &$(14,55616,5/7)$  &  $(14,7,5,47424)\cup(14,14,10,8192)$              &$(14,52732,5/7)$                                       \\\hline
36 &$(14,136,3/11)$   &  $(14,11,3,112)\cup(14,5,1,24)$                       &$(14,112,1/4)$ and $(14,156,2/7)$              \\\hline
37 &$(14,364,8/21)$   &  $(14,21,8,364)$                                             &$(14,224,1/3)$ and $(14,466,2/5)$              \\\hline
38 &$(15,4800,5/9)$   &  $(15,9,5,4480)\cup(15,6,3,320)$                     &$(15,2734,2/\sqrt{15})$ and $(15,9662,3/5)$\\\hline
39 &$(15,256,7/23)$   &  $(15,23,7,240)\cup(15,15,-1,16)$                    &$(15,240,7/23)$                                          \\\hline
40 &$(16,5768,5/9)$   &  $(16,9,5,5760)\cup(16,7,3,8)$                         &$(16,5408,5/9)$                                         \\\hline
\end{tabular}
\caption{Spherical codes}
\label{tab:t12}
\end{table}


\begin{thebibliography}{11}
\bibitem{BV}
\textsc{Bachoc, C.; Vallentin, F.}: Semidefinite programming bounds for spherical codes, 2007 IEEE International Symposium on Information Theory, 1801-1805.
\bibitem{BEÖVW}
\textsc{Braun, M.; Etzion T.; Östergård, P. R. J.; Vardy A.; Wassermann A.}: Existence of $q$-analogs of Steiner systems, Forum of Mathematics Pi (2016),  vol. 4, e7.
\bibitem{CH}
\textsc{Cannon, J. J.; Holt, D. F.}:  The transitive permutation groups of degree 32. Exp. Math. 17 (2008), 307--314.
\bibitem{Cohn}
\textsc{Cohn, H.}: Small spherical and projective codes. \url{https://cohn.mit.edu/spherical-codes}.
\bibitem{Unopt}
\textsc{Cohn, H.; Kumar, A.}: Universally optimal distribution of points on spheres. J. Amer. Math. Soc. 20 (2007), 99--148.
\bibitem{Sl}
\textsc{Conway, J. H.; Hardin, R. H.; Sloane N. J. A.}: Packing lines, planes, etc.: packings in Grassmannian spaces, Experiment. Math. 5 (1996), 139--159.
\bibitem{SPLAG}
\textsc{Conway, J. H.; Sloane, N. J. A.}: Sphere packings, lattices and groups, third edition, Grundlehren der Mathematischen Wissenschaften 290, Springer-Verlag, New York, 1999.
\bibitem{DF}
\textsc{Ding, C.; Feng, T.}: A generic construction of complex codebooks meeting the Welch bound, IEEE Trans. Inform. Theory 53 (2007), 4245--4250.
\bibitem{ERS} 
\textsc{Edel, Y.; Rains, E. M.; Sloane N. J. A.}: On kissing numbers in dimensions 32 to 128, Electron. J. Combin. 5 (1998), Research Paper 22, 5 pp.
\bibitem{EZ}
\textsc{Ericsson, T.; Zinoviev, V.}: Codes on Euclidean Spheres.  Elsevier Science (2001).
\bibitem{Gan}
\textsc{Ganzhinov M.}: Codes inside shells of low-dimensional integer lattices. [Data set]. Zenodo. \url{https://doi.org/10.5281/zenodo.10784487}(Mar 7, 2024)
\bibitem{G}
\textsc{GAP - Groups,Algorithms,Programming -a System for Computational Discrete Algebra}: \url{http://www.gap-system.org}.
\bibitem{SHS}
\textsc{Hardin, R. H.; Sloane, J. A.; Smith, W. D.; and others}: Tables of spherical codes. 1994 \url{http://neilsloane.com/packings}.
\bibitem{HRT}
\textsc{Holt, D.; Royle, G.; Tracey, G.}: The transitive groups of degree 48 and some applications. J. Algebra 607 A (2022), 372--386.
\bibitem{H}
\textsc{Hulpke, A.}: Constructing transitive permutation groups. J. Symb. Comput. 39 (2005), 1--30.
\bibitem{IJM}
\textsc{Iverson, J. J.; Jasper, J.; Mixon, D. G.}: Optimal line packings from finite group actions. Forum Math. Sigma 8 (2020), e6, 40 pp.
\bibitem{KL}
\textsc{Kabatyanskii, G. A.; Levenshtein, V. I.}: Bounds for packings on a sphere and in space. Problemy Peredachi Informatsii 14.1 (1978), 3--25.
\bibitem{LÖB}
\textsc{Laaksonen, A.; \"Ostergård, P. R. J.}: Constructing error-correcting binary codes using transitive permutation groups. Discret. Appl. Math. 233 (2017), 65--70.
\bibitem{LÖT}
\textsc{Laaksonen, A.; \"Ostergård, P. R. J.}:  Lower Bounds on Error-Correcting Ternary, Quaternary and Quinary Codes. ICMCTA 2017, 228--237.
\bibitem{Grape}
\textsc{Soicher, L. H.}: The GRAPE package for GAP, Version 4.9.0, (2022).
\bibitem{Tammes}
\textsc{Tammes, P. M. L.}: On the number and arrangements of the places of exit on the surface of pollen-grains. University of Groningen. 
\bibitem{W}
\textsc{Wang, J.}: Finding and investigating exact spherical codes, Experiment. Math. 18 (2009), 249--256.
\end{thebibliography}
\end{document}